\documentclass[12pt,reqno]{amsart}
\usepackage{enumerate, latexsym, amsmath, amsfonts, amssymb, amsthm, color}
\def\pmod #1{\ ({\rm{mod}}\ #1)}

\def\bg{\bigg}
\def\({\bg(}
\def\){\bg)}

\def\gs{\geqslant}

\def\eq{\equiv}

\def\Ack{\medskip\noindent {\bf Acknowledgment}}
\theoremstyle{plain}
\newtheorem{theorem}{Theorem}

\newtheorem{lemma}{Lemma}

\newtheorem{conjecture}{Conjecture}
\theoremstyle{definition}

\theoremstyle{remark}
\newtheorem{remark}{Remark}

 \vspace{4mm}

\begin{document}
\title
[{Proof of some congruence conjectures of Z.-H. Sun}]
{Proof of some congruence conjectures of Z.-H. Sun involving Ap\'{e}ry-like numbers}

\author
[Guo-Shuai Mao] {Guo-Shuai Mao}

\address {(Guo-Shuai Mao) Department of Mathematics, Nanjing
University of Information Science and Technology, Nanjing 210044,  People's Republic of China\\
{\tt maogsmath@163.com  } }

\keywords{Congruences; Franel numbers; Bernoulli numbers; Euler numbers; Fermat quotient.
\newline \indent {\it Mathematics Subject Classification}. Primary 11A07; Secondary 11B68, 05A10, 11B65.
\newline \indent This research was supported by the National Natural Science Foundation of China (grant 12001288).}

 \begin{abstract} In this paper, we mainly prove the following conjecture of Z.-H. Sun \cite{SH20}:
 Let $p>3$ be a prime. Then
 $$
 \sum_{k=0}^{p-1}\binom{2k}k\frac{3k+1}{(-16)^k}f_k\equiv(-1)^{(p-1)/2}p+p^3E_{p-3}\pmod{p^4},
 $$
 where $f_n=\sum_{k=0}^n\binom{n}k^3$ and $E_n$ stand for the $n$th Franel number and $n$th Euler number respectively.
\end{abstract}

\maketitle

\section{Introduction}
\setcounter{lemma}{0}
\setcounter{theorem}{0}
\setcounter{corollary}{0}
\setcounter{remark}{0}
\setcounter{equation}{0}
\setcounter{conjecture}{0}
In 1894, Franel \cite{F} found that the numbers
$$
f_n=\sum_{k=0}^n\binom{n}k^3\ \ (n=0,1,2,\ldots)
$$
satisfy the recurrence relation (cf. \cite[A000172]{SL}):
$$
(n+1)^2f_{n+1}=(7n^2+7n+2)f_n+8n^2f_{n-1}\ \ (n=1,2,3,\ldots).
$$
These numbers are now called Franel numbers. Callan \cite{C} found a combinatorial interpretation of the Franel numbers. The Franel numbers play important roles in combinatorics and number theory. The sequence $(f_n)_{n\gs0}$ is one of the five sporadic sequences (cf. \cite[Section 4]{Z})
which are integral solutions of certain Ap\'ery-like recurrence equations and closely related to the theory of modular forms.
In 2013, Sun \cite{S13} revealed some unexpected connections between numbers $f_n$ and representations of primes $p\eq1\pmod3$ in the form $x^2+3y^2$ with $x,y\in\mathbb{Z}$, for example, Z.-W. Sun \cite[(1.2)]{S13} showed that
$$
\sum_{k=0}^{p-1}\frac{f_k}{2^k}\equiv\sum_{k=0}^{p-1}\frac{f_k}{(-4)^k}\equiv 2x-\frac{p}{2x}\pmod{p^2}.
$$
For more studies on Franel numbers, we refer the readers to \cite{G,G1,Liu,M,S13,sun}.

For $n\in\mathbb{N}$, define
$$H_n:=\sum_{0<k\leq n}\frac1k,\ \ \mbox{and}\ \ H_n^{(2)}:=\sum_{0<k\leq n}\frac1{k^2}\ \ \mbox{with}\ \  \ H_0=H_0^{(2)}=0,$$
where $H_n$ with $n\in\mathbb{N}$ are often called the classical harmonic numbers. Let $p>3$ be a prime. J. Wolstenholme \cite{wolstenholme-qjpam-1862} proved that
\begin{align}\label{hp-1}
H_{p-1}\equiv0\pmod{p^2}\ \mbox{and}\ H_{p-1}^{(2)}\equiv0\pmod p,
\end{align}
which imply that
\begin{align}\label{2p1p1}
\binom{2p-1}{p-1}\equiv1\pmod{p^3}.
\end{align}
In 1990 Glaisher \cite{gl1,gl2} showed further that
\begin{equation}\label{gl}
\binom{2p-1}{p-1}\equiv1-\frac23p^3B_{p-3}\pmod{p^4},
\end{equation}
where $B_0, B_1, B_2,\ldots$ are Bernoulli numbers. (See \cite{IR} for an introduction to Bernoulli numbers).

\noindent Recall that the Euler numbers $\{E_n\}$ are defined by
$$\frac{2e^t}{e^{2t}+1}=\sum_{n=0}^\infty E_n\frac{t^n}{n!}\ (|t|<\frac{\pi}2).$$
In \cite{SH20}, Z.-H. Sun studied some Ap\'{e}ry-like numbers, he got many beautiful congruences involving these numbers and proposed a lot of conjectures at the end of the paper, one of the conjectures is,
\begin{conjecture}\label{Conj1.1} {\rm (\cite[Conjecture 4.29]{SH20})} Let $p>3$ be a prime. Then
$$
 \sum_{k=0}^{p-1}\binom{2k}k\frac{3k+1}{(-16)^k}f_k\equiv(-1)^{(p-1)/2}p+p^3E_{p-3}\pmod{p^4}.
 $$
\end{conjecture}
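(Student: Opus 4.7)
The strategy I would pursue is to expand the Franel number by its definition $f_k=\sum_{j=0}^k\binom{k}{j}^3$ and interchange the order of summation. Setting $F(k,j):=(3k+1)\binom{2k}{k}\binom{k}{j}^3/(-16)^k$ and $T_j:=\sum_{k=j}^{p-1}F(k,j)$, the target becomes
\begin{align*}
\sum_{j=0}^{p-1}T_j\equiv(-1)^{(p-1)/2}p+p^3E_{p-3}\pmod{p^4}.
\end{align*}
The first substantial task is then to pin down $T_j$ in closed form, or at least to sufficient $p$-adic precision.

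The shape of the summand---a linear factor $3k+1$ paired with $\binom{2k}{k}/(-16)^k$---is characteristic of Ramanujan-like $1/\pi$ series, for which Wilf--Zeilberger telescoping certificates are typically available. I would therefore look for a companion $G(k,j)$ satisfying $F(k,j)=G(k+1,j)-G(k,j)+R(k,j)$ with $R$ a manageable remainder, so that $T_j$ collapses to a boundary expression in $p$ and $j$. Should a direct WZ certificate prove unwieldy because of the cubic $\binom{k}{j}^3$, an alternative is to apply a hypergeometric transformation of Clausen or Whipple type to rewrite $\binom{k}{j}^3$, turning the inner sum into a well-poised $_{3}F_{2}$/$_{4}F_{3}$ that evaluates in closed form. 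In either case the double sum reduces to a single sum over $j$, which I would compare to the $p^4$-congruences of Z.-H.~Sun for $\sum_{k=0}^{p-1}(3k+1)\binom{2k}{k}/(-16)^k$ already available in \cite{SH20}.

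The $p^4$ precision at the end is obtained by mobilising the usual toolkit: Morley's congruence $\binom{2k}{k}\equiv(-4)^k\binom{(p-1)/2}{k}^2\pmod{p^r}$ and its refinement, the Wolstenholme and Glaisher congruences \eqref{hp-1} and \eqref{gl}, and the standard evaluations of $H_{(p-1)/2}^{(r)}$ in terms of $B_{p-3}$ and $E_{p-3}$. The leading term $(-1)^{(p-1)/2}p$ is expected to arise from the ``singular'' index $k=(p-1)/2$, where $\binom{2k}{k}$ carries a single factor of $p$, while the $p^3E_{p-3}$ correction should come from the next-order expansion at that term together with the contributions of $(p+1)/2\le k\le p-1$. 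The main obstacle I anticipate is the telescoping/transformation step: Franel-type cubic sums are notoriously resistant to direct hypergeometric identification, and even when a certificate exists it is usually long enough that controlling its boundary value to fourth-order precision in $p$ requires delicate bookkeeping of Fermat-quotient and harmonic-sum corrections.
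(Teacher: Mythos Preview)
Your overall architecture---interchange the order of summation, collapse the inner sum in $k$ to a closed form, then feed the resulting single sum into the Morley/Wolstenholme/harmonic-number machinery---matches the paper. The gap is in the very first move: you expand $f_k$ by its defining cubic sum $\sum_j\binom{k}{j}^3$, and then hope that the inner sum
\[
T_j=\sum_{k=j}^{p-1}\frac{3k+1}{(-16)^k}\binom{2k}{k}\binom{k}{j}^3
\]
admits a WZ certificate or a Clausen/Whipple reduction. You yourself flag that ``Franel-type cubic sums are notoriously resistant to direct hypergeometric identification,'' and indeed no clean closed form for this $T_j$ is known; the cubic $\binom{k}{j}^3$ is precisely the obstruction.

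The paper sidesteps this by \emph{not} using the definition of $f_k$. Instead it invokes Z.-W.~Sun's alternative representation
\[
f_n=\sum_{j=0}^{n}\binom{n+2j}{3j}\binom{2j}{j}\binom{3j}{j}(-4)^{n-j},
\]
so that after interchanging sums the inner sum in $k$ becomes $\sum_{k=j}^{p-1}\frac{3k+1}{4^k}\binom{2k}{k}\binom{k+2j}{3j}$, which telescopes \emph{exactly} to $\dfrac{2p(3j+1)}{(2j+1)4^p}\binom{p+2j}{3j+1}\binom{2p}{p}$. This is the key trick you are missing: the right expansion of $f_n$ turns the inner sum from a very-well-poised nuisance into a one-line identity. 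From there the paper splits the $j$-sum at $j=(p-1)/2$, expands $\binom{p+2j}{3j+1}$ as $p(-1)^j(1+p(H_{2j}-H_j))+O(p^3)$ on the lower range, and handles the resulting $\sum_j\binom{2j}{j}(H_{2j}-H_j)/((2j+1)4^j)$ via a nontrivial closed-form identity (their Lemma~\ref{id}, found with \texttt{Sigma}) rather than by generic Fermat-quotient bookkeeping. Your outline of the endgame (Morley, the singular index $(p-1)/2$, $E_{p-3}$ from the upper half) is morally right, but without the Sun representation of $f_n$ and the exact telescoping identity it enables, the plan does not get off the ground. (Incidentally, the congruence you quote as ``Morley's'' should read $\binom{2k}{k}/(-4)^k\equiv\binom{(p-1)/2}{k}\pmod p$, without the square.)
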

\begin{remark} In \cite[Conjecture 4.23]{sun13}, Sun conjectured the congruence in Conjecture \ref{Conj1.1} modulo $p^2$ and Guo \cite{G1} proved it modulo $p^3$.
\end{remark}
In this paper, our first goal is to prove the above conjecture.
 \begin{theorem}\label{Th1.1} Conjecture \ref{Conj1.1} is true.
\end{theorem}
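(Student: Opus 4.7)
My plan is to reduce the double sum defining $S$ to a more tractable hypergeometric form and then extract the mod-$p^4$ congruence by careful $p$-adic expansion. Writing $f_k=\sum_{j=0}^{k}\binom{k}{j}^3$ and swapping the order of summation gives
$$S:=\sum_{k=0}^{p-1}\binom{2k}{k}\frac{3k+1}{(-16)^k}f_k=\sum_{j=0}^{p-1}\sum_{k=j}^{p-1}\binom{2k}{k}\binom{k}{j}^3\frac{3k+1}{(-16)^k}.$$
Using a binomial identity such as $\binom{2k}{k}\binom{k}{j}=\binom{2k}{j}\binom{2k-j}{k}$, followed by a shift $k\mapsto k+j$, I would recast the inner sum in a form to which a WZ-style creative telescoping or a closed-form identity applies. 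The goal is to reduce $S$ to a linear combination of auxiliary sums of the shape $\sum_{k=0}^{p-1}\binom{2k}{k}^{2}(\cdots)/(-16)^k$ or $\sum_{k}\binom{(p-1)/2}{k}^{2}(\cdots)$, for which mod-$p^4$ congruences are already available in the literature of Z.-W.~Sun, Z.-H.~Sun, Guo, and Mao.

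The $p$-adic analysis is the second stage. Modulo $p$ one has $\binom{2k}{k}/(-16)^k\equiv\binom{(p-1)/2}{k}/4^k$, so the leading term $(-1)^{(p-1)/2}p$ arises from a $_2F_1$-type evaluation of the auxiliary sum. To reach precision $p^4$, I would expand
$$\frac{\binom{2k}{k}}{(-16)^k}\equiv\frac{\binom{(p-1)/2}{k}}{4^k}\bigl(1+pL_1(k)+p^2L_2(k)+p^3L_3(k)\bigr)\pmod{p^4},$$
where each $L_i(k)$ is an explicit combination of truncated harmonic and alternating sums evaluated at half-integer-like arguments. Summation term by term then reduces the problem to finite sums of Wolstenholme--Glaisher type (\ref{hp-1}), (\ref{gl}), together with a classical Euler-number congruence of the kind established in \cite{SH20}, which is what contributes the factor $p^3 E_{p-3}$.

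The main obstacle I anticipate is Step~1: since the congruence is already known modulo $p^3$ (Guo \cite{G1}), the entire novelty lies in isolating the $p^3 E_{p-3}$ correction, and a naive mod-$p^4$ bookkeeping produces a tangle of harmonic-type sums that resists direct simplification. Finding a transformation that exposes the Euler-number contribution cleanly---rather than burying it inside cancellation among numerous $p^3$ terms---is where most of the work lies. If the direct combinatorial route does not succeed, my fallback is to apply the WZ algorithm to the summand $\binom{2k}{k}(3k+1)f_k/(-16)^k$ in order to produce a creative telescope, from which the mod-$p^4$ congruence can be extracted after evaluating the telescoping boundary terms modulo $p^4$ using the same package of fine Bernoulli/Euler congruences.
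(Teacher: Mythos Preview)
Your plan is honest about its own gaps, and indeed the central difficulty you flag in Step~1 is exactly where the proposal fails to constitute a proof. Expanding $f_k$ by its defining sum $\sum_j\binom{k}{j}^3$ and then hoping the inner sum $\sum_{k\ge j}\binom{2k}{k}\binom{k}{j}^3(3k+1)/(-16)^k$ collapses is not known to work: no closed form for that inner sum is available, and the $p$-adic expansion you sketch would leave you with precisely the ``tangle of harmonic-type sums'' you anticipate, without a mechanism to isolate $p^3E_{p-3}$. The fallback WZ idea is also only a hope, not an argument.

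The paper's proof supplies the missing idea you lack: instead of the definition of $f_k$, it uses Z.-W.~Sun's alternative representation
\[
f_n=\sum_{j=0}^{n}\binom{n+2j}{3j}\binom{2j}{j}\binom{3j}{j}(-4)^{n-j},
\]
after which the inner sum over $k$ has an exact closed form,
\[
\sum_{k=j}^{p-1}\frac{3k+1}{4^k}\binom{2k}{k}\binom{k+2j}{3j}
=\frac{2p(3j+1)\binom{p+2j}{3j+1}\binom{2p}{p}}{(2j+1)4^{p}}.
\]
This collapses $S$ to a single sum over $j$, which is then split at $j=(p-1)/2$; the products $\binom{3j}{j}\binom{p+2j}{3j+1}$ are expanded $p$-adically (Lemma~\ref{p2j}), and the resulting harmonic correction $\sum_j\binom{2j}{j}(H_{2j}-H_j)/((2j+1)4^j)$ is handled by a \texttt{Sigma}-discovered identity (Lemma~\ref{id}) together with known congruences of Tauraso and Z.-W.~Sun that inject $E_{p-3}$. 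None of these ingredients appears in your outline, and the first of them---the alternate expansion of $f_n$---is the step that converts the problem from intractable to routine.
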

Recall that another Ap\'{e}ry-like numbers
$$
T_n=\sum_{k=0}^n\binom{n}k^2\binom{2k}n^2\ \ (n=0,1,2,\ldots)
$$
which satisfy the recurrence relation:
$$
(n+1)^3T_{n+1}=(2n+1)(12n(n+1)+4)T_n-16n^3T_{n-1}\ \ (n\geq1).
$$
Z.-H. Sun \cite[Theorem 2.1]{sjdea} obtain that for any nonnegative integer $n$,
\begin{align}\label{sid}
T_n=\sum_{k=0}^{\lfloor n/2\rfloor}\binom{2k}k^2\binom{4k}{2k}\binom{n+2k}{4k}4^{n-2k}.
\end{align}
In the same paper he showed that for any prime $p>3$, we have
$$
\sum_{k=0}^{p-1}(2k+1)\frac{T_k}{4^k}\equiv p\pmod{p^4}
$$
and
$$
\sum_{k=0}^{p-1}(2k+1)\frac{T_k}{(-4)^k}\equiv(-1)^{(p-1)/2}p\pmod{p^3}.
$$
Our second goal is to prove the following two congruences which were conjectured by Z.-H. Sun \cite[Conjecture 2.4]{sjdea}:
\begin{theorem}\label{Th1.2} Let $p>3$ be a prime. Then
\begin{equation}\label{t4}
\sum_{k=0}^{p-1}(2k+1)\frac{T_k}{4^k}\equiv p+\frac76p^4B_{p-3}\pmod{p^5}
\end{equation}
and
\begin{equation}\label{t-4}
\sum_{k=0}^{p-1}(2k+1)\frac{T_k}{(-4)^k}\equiv(-1)^{(p-1)/2}p+p^3E_{p-3}\pmod{p^4}.
\end{equation}
\end{theorem}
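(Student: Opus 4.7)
The plan is to apply identity~\eqref{sid} to $T_n$ in both sums, swap the order of summation, and evaluate the resulting inner sums in closed form. Substituting $T_n=\sum_{k=0}^{\lfloor n/2\rfloor}\binom{2k}{k}^2\binom{4k}{2k}\binom{n+2k}{4k}4^{n-2k}$ and using $\binom{2k}{k}^2\binom{4k}{2k}=(4k)!/(k!)^4$ reduces both sums to evaluating, for each $k$, an inner sum $\sum_{n=2k}^{p-1}\varepsilon^n(2n+1)\binom{n+2k}{4k}$ with $\varepsilon\in\{1,-1\}$. For $\varepsilon=1$, the identity $(n+2k+1)\binom{n+2k}{4k}=(4k+1)\binom{n+2k+1}{4k+1}$ combined with the hockey-stick identity gives $\sum_{n=2k}^{p-1}(2n+1)\binom{n+2k}{4k}=\frac{p(4k+1)}{2k+1}\binom{p+2k}{4k+1}$. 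For $\varepsilon=-1$, the discrete antiderivative $A(n):=(4k+1)\binom{n+2k+1}{4k+1}$ satisfies $A(n)+A(n-1)=(2n+1)\binom{n+2k}{4k}$, so the alternating sum telescopes to $A(p-1)+A(2k-1)=(4k+1)\binom{p+2k}{4k+1}$. Writing $\binom{p+2k}{4k+1}=p\prod_{j=1}^{2k}(p^2-j^2)/(4k+1)!$ then yields
\[\sum_{n=0}^{p-1}(2n+1)\frac{T_n}{4^n}=p^2\sum_{k=0}^{(p-1)/2}\frac{\prod_{j=1}^{2k}(p^2-j^2)}{(2k+1)(k!)^4\,16^k},\quad \sum_{n=0}^{p-1}(2n+1)\frac{T_n}{(-4)^n}=p\sum_{k=0}^{(p-1)/2}\frac{\prod_{j=1}^{2k}(p^2-j^2)}{(k!)^4\,16^k}.\]

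Next I expand $\prod_{j=1}^{2k}(p^2-j^2)=((2k)!)^2\prod_{j=1}^{2k}(1-p^2/j^2)$ using the clean factorization $\prod_{j=1}^{2k}(1-p^2/j^2)=\binom{p+2k}{2k}\binom{p-1}{2k}$ (so in particular, at the boundary $k=(p-1)/2$, $\prod_{j=1}^{p-1}(1-p^2/j^2)=\binom{2p-1}{p-1}$) together with the Taylor expansion $\prod_{j=1}^{2k}(1-p^2/j^2)\equiv 1-p^2H_{2k}^{(2)}\pmod{p^4}$. For the congruence \eqref{t-4}, this reduces the right-hand side modulo $p^4$ to $p\sum_{k=0}^{(p-1)/2}\binom{2k}{k}^2/16^k-p^3\sum_{k=0}^{(p-1)/2}\binom{2k}{k}^2H_{2k}^{(2)}/16^k$. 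The leading sum is the known supercongruence $\sum_{k=0}^{(p-1)/2}\binom{2k}{k}^2/16^k\equiv(-1)^{(p-1)/2}+p^2E_{p-3}\pmod{p^3}$ of Z.-H.~Sun, while the second vanishes modulo $p$ by the symmetry $k\leftrightarrow(p-1)/2-k$ (using $\binom{2k}{k}/4^k\equiv(-1)^k\binom{(p-1)/2}{k}\pmod p$ and $H_{p-1-2m}^{(2)}\equiv-H_{2m}^{(2)}\pmod p$). Combining these proves \eqref{t-4}.

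For \eqref{t4}, the factor $1/(2k+1)$ creates a pole at $k=(p-1)/2$. I split $S^+=S^+_{\mathrm{reg}}+T^+$, and using $(p-1)!^2/(((p-1)/2)!)^4=\binom{p-1}{(p-1)/2}^2$ the boundary term equals $T^+=\binom{p-1}{(p-1)/2}^2\binom{2p-1}{p-1}/(p\cdot 16^{(p-1)/2})$. By Glaisher's congruence \eqref{gl}, Morley's theorem $\binom{p-1}{(p-1)/2}\equiv(-1)^{(p-1)/2}4^{p-1}\pmod{p^3}$, and the Fermat-quotient expansion of $4^{p-1}$, the contribution $p^2T^+$ is computable modulo $p^5$ and produces the leading $p$ of the right-hand side together with a specific $p^4B_{p-3}$-term (alongside Fermat-quotient contributions to be canceled later). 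The regular part $S^+_{\mathrm{reg}}$ is expanded analogously via $1-p^2H_{2k}^{(2)}$, reducing to a mod-$p^3$ congruence for $\sum_{k=0}^{(p-3)/2}\binom{2k}{k}^2/((2k+1)16^k)$ in terms of $B_{p-3}$ and the Fermat quotient $q_p(4)=(4^{p-1}-1)/p$, obtainable via Z.-H.~Sun's Ap\'ery-like techniques.

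The main obstacle is the careful $p$-adic bookkeeping in the $+4$ case: verifying that the several $B_{p-3}$-contributions (from \eqref{gl}, from the higher-order Morley and Fermat-quotient expansions, and from the regular-sum analysis) combine with the correct rational coefficients to produce exactly $\frac{7}{6}p^4B_{p-3}$, and that all $q_p(4)$-terms cancel. Apart from this meticulous calculation, no essentially new ingredients beyond standard Ap\'ery-like techniques are required.
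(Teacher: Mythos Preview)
Your overall strategy coincides with the paper's. Your two exact identities
\[
\sum_{n=0}^{p-1}(2n+1)\frac{T_n}{4^n}=p^2\sum_{k=0}^{(p-1)/2}\frac{\prod_{j=1}^{2k}(p^2-j^2)}{(2k+1)(k!)^4\,16^k},\qquad
\sum_{n=0}^{p-1}(2n+1)\frac{T_n}{(-4)^n}=p\sum_{k=0}^{(p-1)/2}\frac{\prod_{j=1}^{2k}(p^2-j^2)}{(k!)^4\,16^k}
\]
are exactly the content of Sun's Theorems~2.6 and~2.7 in \cite{sjdea} (there stated as congruences after expanding the product), which the paper simply cites. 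Your treatment of \eqref{t-4} is then complete and identical to the paper's: the symmetry $k\leftrightarrow(p-1)/2-k$ kills the $H_{2k}^{(2)}$-sum mod $p$, and Z.-W.~Sun's congruence \eqref{p3} finishes it.

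For \eqref{t4}, however, your outline has two concrete gaps. First, for the boundary term $p^2T^+=p\binom{2p-1}{p-1}\binom{p-1}{(p-1)/2}^2/4^{p-1}$ you invoke only Morley's congruence modulo $p^3$; that is not enough to determine $p^2T^+$ modulo $p^5$. You need Carlitz's refinement \eqref{ca}, $\binom{p-1}{(p-1)/2}\equiv(-1)^{(p-1)/2}(4^{p-1}+\tfrac{1}{12}p^3B_{p-3})\pmod{p^4}$, together with $H_{p-1}^{(2)}$ modulo $p^2$ (as in \cite[Corollary~5.1]{s2000}), and this contributes a definite $B_{p-3}$-term you have not accounted for. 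Second, after expanding $\prod_{j\le 2k}(1-p^2/j^2)\equiv 1-p^2H_{2k}^{(2)}$ in the regular part you mention only the sum $\sum_{k\le(p-3)/2}\binom{2k}{k}^2/((2k+1)16^k)$; you omit the companion sum
\[
\sum_{k=0}^{(p-3)/2}\frac{\binom{2k}{k}^2}{(2k+1)16^k}H_{2k}^{(2)}\equiv-\frac54 B_{p-3}\pmod p,
\]
which is the paper's Lemma~\ref{h2k} and is indispensable: it contributes $+\tfrac54p^4B_{p-3}$, and only with it (together with the $+\tfrac{5}{12}$ from \eqref{2k+1} and the $-\tfrac12$ from the boundary analysis) do the pieces add to $\tfrac76$. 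This lemma is not routine bookkeeping; its proof reduces, via the same $k\leftrightarrow(p-1)/2-k$ reflection, to the congruence $\sum_{k=1}^{(p-1)/2}\binom{2k}{k}^2H_{2k}^{(2)}/(k\cdot16^k)\equiv-\tfrac52B_{p-3}\pmod p$ established in \cite{mww}. So your closing claim that ``no essentially new ingredients beyond standard Ap\'ery-like techniques are required'' is too optimistic: the missing $H_{2k}^{(2)}$-sum is precisely the new input, and the paper isolates it as a separate lemma. (A minor point: the mod-$p^3$ evaluation of $\sum_{k\le(p-3)/2}\binom{2k}{k}^2/((2k+1)16^k)$ you attribute to Z.-H.~Sun is due to Z.-W.~Sun, see \eqref{2k+1} and \cite{sp}.)
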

\begin{remark}
A few days after I submitted this paper to arXiv, Z.-H. Sun told me that the two congruences in Theorem \ref{Th1.2} have been proved by Liu \cite{Lt}, and then I found that our proof differs from his, because we could obtain the results directly by using one congruence involving harmonic numbers which was obtained by the author, C. Wang and J. Wang \cite{mww} and by the results in \cite{sjdea} and \cite[(1.7)]{sun13}. And one of the referees suggested me to delete this theorem and related paragraphs and references, while another referee thought that I keep this theorem in this paper is a reasonable point, so I think I may keep this theorem and its proof in this paper.
\end{remark}
We are going to prove Theorem \ref{Th1.1} in Section 2. And the last Section is devoted to prove Theorem \ref{Th1.2}. Our proofs make use of some new combinatorial identities which were found by the package \texttt{Sigma} \cite{S} via the software \texttt{Mathematica}.
 \section{Proof of Theorem \ref{Th1.1}}
 \setcounter{lemma}{0}
\setcounter{theorem}{0}
\setcounter{corollary}{0}
\setcounter{remark}{0}
\setcounter{equation}{0}
\setcounter{conjecture}{0}
We need the following identity due to Z.-W. Sun \cite{S13}:
\begin{align}\label{2.1}f_n=\sum_{k=0}^n\binom{n+2k}{3k}\binom{2k}k\binom{3k}k(-4)^{n-k}.
\end{align}
We also rely on the following identity which can be proved by induction on $n$:
\begin{equation}\label{3k1id}
\sum_{k=j}^{n-1}\frac{3k+1}{4^k}\binom{2k}k\binom{k+2j}{3j}=\frac{2n(3j+1)\binom{n+2j}{3j+1}\binom{2n}n}{(2j+1)4^n}.
\end{equation}
By loading the package \texttt{Sigma} in the software \texttt{Mathematica}, we find an interesting and useful identity:
\begin{lemma}\label{id} For any positive integer $n$, we have
\begin{align*}
&\sum_{j=0}^{n-1}\binom{n}j(-1)^j\frac{H_{2j}-H_j}{2j+1}\\
&=\frac{4^n}{2(2n+1)\binom{2n}n}\sum_{k=1}^n\frac{\binom{2k}k}{k4^k}+\frac{(-1)^nH_n}{2n+1}-\frac{4^nH_n}{2(2n+1)\binom{2n}n}-\frac{(-1)^nH_{2n}}{2n+1}.
\end{align*}
\end{lemma}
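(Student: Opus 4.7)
The plan is induction on $n$. Writing $L_n$ and $R_n$ for the left- and right-hand sides, the base case $n=1$ is a direct verification: the only term on the LHS, at $j=0$, vanishes because $H_0=0$, and a short calculation gives $R_1=0$ as well. (It is also helpful to observe that the identity is equivalent to the cleaner form
\[
\sum_{j=0}^{n}\binom{n}{j}(-1)^j\frac{H_{2j}-H_j}{2j+1} \;=\; \frac{4^n}{2(2n+1)\binom{2n}{n}}\sum_{k=1}^{n}\frac{1}{k}\!\left(\frac{\binom{2k}{k}}{4^k}-1\right),
\]
obtained by moving the isolated $(-1)^n(H_{2n}-H_n)/(2n+1)$ term across; this is the version I would actually induct on.)

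For the inductive step I would compute $L_{n+1}-L_n$ and $R_{n+1}-R_n$ separately and show they agree. To handle $L_{n+1}-L_n$, apply Pascal's rule $\binom{n+1}{j}=\binom{n}{j}+\binom{n}{j-1}$, shift the dummy variable $j\mapsto j+1$ in the second resulting sum, and exploit
\[
H_{2j+2}-H_{j+1} \;=\; (H_{2j}-H_j) + \frac{1}{2j+1} - \frac{1}{2j+2}
\]
to peel off a copy of the original sum, leaving only harmonic-free residues of the form $\sum_{j}\binom{n}{j}(-1)^j/((2j+1)(2j+3))$ and the like. All such residues are evaluable in closed form from the beta-integral identity $\sum_{j=0}^{n}\binom{n}{j}(-1)^j/(2j+1)=4^n/((2n+1)\binom{2n}{n})$ together with its obvious shifted and partial-fractioned variants. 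On the right, the ratio $a_{n+1}/a_n = 4(n+1)^2/((2n+1)(2n+3))$ with $a_n:=4^n/((2n+1)\binom{2n}{n})$ drives the shift, and the new central-binomial summand $\binom{2n+2}{n+1}/((n+1)4^{n+1})$ entering the partial sum in $R_{n+1}$ is the single piece carrying the telescoping information; the remaining terms of $R_{n+1}-R_n$ come from the $H_n$ and $H_{2n}$ shifts.

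The principal obstacle is bookkeeping: several classical sums must be evaluated and then combined so that all $H_n$, $H_{2n}$, and $\binom{2n}{n}/4^n$ contributions cancel correctly on both sides of the matching. A more streamlined alternative, consistent with the remark that the identity was discovered by \texttt{Sigma}, is to invoke the package's creative-telescoping machinery directly on the summand $S(n,j):=\binom{n}{j}(-1)^j(H_{2j}-H_j)/(2j+1)$: \texttt{Sigma} returns polynomials $\alpha(n),\beta(n)$ and a rational certificate $G(n,j)$ with $\alpha(n)S(n,j)+\beta(n)S(n+1,j)=G(n,j+1)-G(n,j)$, from which summation in $j$ yields a first-order inhomogeneous recurrence for $L_n$; one then checks that $R_n$ satisfies the same recurrence and matches at $n=1$, completing the proof.
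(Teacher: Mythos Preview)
Your inductive plan has a gap at the step where you claim to ``peel off a copy of the original sum.'' After Pascal's rule and the shift $j\mapsto j+1$, the second piece becomes
\[
-\sum_{j=0}^{n}\binom{n}{j}(-1)^j\frac{H_{2j+2}-H_{j+1}}{2j+3},
\]
and your harmonic identity then leaves the residual
\[
-\sum_{j=0}^{n}\binom{n}{j}(-1)^j\frac{H_{2j}-H_{j}}{2j+3}
\]
together with the harmonic-free terms. But the denominator here is $2j+3$, not $2j+1$, so this is \emph{not} a copy of $\tilde L_n$; the harmonic weight survives and cannot be evaluated by the beta-integral identities you list. A first-order relation for the cleaner sum $\tilde L_n$ does exist, but it has the shape $(2n+3)\tilde L_{n+1}=2(n+1)\tilde L_n+(\text{explicit})$, so one would have to form the combination $(2n+3)\tilde L_{n+1}-2(n+1)\tilde L_n$ from the start rather than the plain difference $\tilde L_{n+1}-\tilde L_n$; the bare Pascal manipulation does not produce this combination.

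The paper's proof proceeds differently: it feeds the sum directly to \texttt{Sigma}, which returns not a first-order but a \emph{second-order} inhomogeneous recurrence
\[
-4(n+1)^2L_n+2(2n+3)^2L_{n+1}-(2n+3)(2n+5)L_{n+2}=(\text{closed form in }n,H_n,H_{2n}),
\]
and then invokes \texttt{SolveRecurrence} and \texttt{FindLinearCombination} to solve it and match initial values. So your fallback assumption that creative telescoping on $S(n,j)$ yields a first-order certificate is also off; the actual \texttt{Sigma} run on $L_n$ (the sum to $n-1$, whose right-hand side carries the extra $H_n,H_{2n}$ terms) gives order two, and the verification uses two initial conditions rather than one.
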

\begin{proof} We can prove the above identitiy by using the package \verb"Sigma" \cite{S} in Mathematica as follows, other similar identities in the whole paper also can be proved in the same way. We first insert
\begin{align*}
\text{In}[1]:&=\text{mysum}=\text{SigmaSum}[\text{SigmaBinomial}[n,k]*(-1)^k\\&*(\text{SigmaHNumber}[2*k]-\text{SigmHNumber}[k])\\&/(2*k+1),\{k,0,n-1\}]\\
\text{Out}[1]=&\sum_{k=0}^{n-1}\frac{\binom{n}k(-1)^k(H_{2k}-H_k)}{2k+1}
\end{align*}
Then we can obtain the recurrence for the above sum:
\begin{align*}
\text{In}[2]:=&\text{rec=GenerateRecurrence[mysum]}\\
\text{Out}[2]=&\{-4(1+n)^2\text{SUM}[n]+2(3+2n)^2\text{SUM}[1+n]\\&
-(3+2n)(5+2n)\text{SUM}[2+n]==\frac1{2(2+n)}+\\&
\frac{(19+24n+8n^2)(-1)^n}{2(1+n)(2+n)(1+2n)}+\frac{(-13-32n-16n^2)(-1)^nH_n}{1+2n}\\&+\frac{(13+32n+16n^2)(-1)^nH_{2n}}{1+2n}\}
\end{align*}
Now we solve the above recurrence:
\begin{flalign*}
&\text{In}[3]:=\text{resol=SolveRecurrence[rec[[1]], SUM[n]]}&&\\
&\text{Out}[3]=\bigg\{\left\{0,\frac{\prod_{l=1}^n\frac{2l}{-1+2l}}{1+2n}\right\},\bigg\{0,\frac{H_n\prod_{l=1}^n\frac{2l}{-1+2l}}{-1-2n}\bigg\},\\
&\bigg\{1,\frac{\bigg(\prod_{l=1}^n\frac{2l}{-1+2l}\bigg)\sum_{l=1}^n\frac{\prod_{l=1}^n\frac{-1+2l}{2l}}{l}}{2(1+2n)}+(-1)^n\left(\frac{H_n}{1+2n}-\frac{H_{2n}}{1+2n}\right)\bigg\}\bigg\}&&
\end{flalign*}
At last, we obtain another form of mysum by finding the linear combination of the solutions:
\begin{flalign*}
\text{In}[4]:=&\text{FindLinearCombination[resol, mysum, 2]}&&\\
\text{Out}[4]=&\frac{\bigg(\prod_{l=1}^n\frac{2l}{-1+2l}\bigg)\sum_{l=1}^n\frac{\prod_{l=1}^n\frac{-1+2l}{2l}}{l}}{2(1+2n)}+\frac{(-1)^nH_n}{1+2n}\\&-\frac{H_n\prod_{l=1}^n\frac{2l}{-1+2l}}{2(1+2n)}-\frac{(-1)^nH_{2n}}{1+2n}
\end{flalign*}
Thus we get the desired identity by simple computation.

\noindent Therefore, the proof of Lemma \ref{id} is finished.
\end{proof}
\begin{lemma}\label{p2j} Let $p>2$ be a prime. If $0\leq j\leq (p-3)/2$, then we have
$$
(3j+1)\binom{3j}j\binom{p+2j}{3j+1}\equiv p(-1)^j(1+pH_{2j}-pH_j)\pmod{p^3}.
$$
If $(p+1)/2\leq j\leq p-1$, then
$$
(3j+1)\binom{2j}j\binom{3j}j\binom{p+2j}{3j+1}\equiv 2p(-1)^j\binom{2j}j\pmod{p^3}.
$$
\end{lemma}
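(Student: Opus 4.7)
The plan is to rewrite both expressions in terms of a single product of $3j+1$ consecutive integers straddling $p$. Using $(3j+1)\binom{3j}{j}=(3j+1)!/(j!(2j)!)$ and cancelling with $(3j+1)!$ in the denominator of $\binom{p+2j}{3j+1}$, one arrives at the clean identity
$$(3j+1)\binom{3j}{j}\binom{p+2j}{3j+1}=\frac{(p+2j)!}{j!(2j)!(p-j-1)!}=\frac{1}{j!(2j)!}\prod_{k=-j}^{2j}(p+k).$$
The rest of the argument then just counts factors of $p$ among the multiplicands $p+k$ and reduces what is left to the required accuracy.

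For the first case, $0\leq j\leq (p-3)/2$, the range $[-j,2j]$ lies strictly inside $(-p,p)$, so the only multiplicand divisible by $p$ is the $k=0$ term. After pulling out this single $p$ and rewriting the negative-index factors via $p-k=-k(1-p/k)$, I would reduce the remaining product modulo $p^2$ as
$$\prod_{k=1}^{j}\!\left(1-\frac{p}{k}\right)\prod_{k=1}^{2j}\!\left(1+\frac{p}{k}\right)\equiv(1-pH_j)(1+pH_{2j})\equiv 1+p(H_{2j}-H_j)\pmod{p^2},$$
which yields the first claim immediately.

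For the second case, $(p+1)/2\leq j\leq p-1$, one now has $2j\geq p+1$, so both $k=0$ and $k=p$ produce factors of $p$, contributing $p\cdot 2p=2p^2$. Since the target modulus is $p^3$, only the remaining product modulo $p$ is needed. Reducing $p+k\equiv k\pmod{p}$ and splitting the range into $[-j,-1]$, $[1,p-1]$, $[p+1,2j]$, Wilson's theorem on the middle piece (giving $(p-1)!\equiv -1$) together with the substitution $k=p+m$ on the third piece give
$$\prod_{\substack{-j\leq k\leq 2j\\ k\neq 0,p}}(p+k)\equiv (-1)^{j+1}j!(2j-p)!\pmod{p}.$$
Combining with $2p^2$ and multiplying through by $\binom{2j}{j}=(2j)!/j!^2$ produces
$$(3j+1)\binom{2j}{j}\binom{3j}{j}\binom{p+2j}{3j+1}\equiv\frac{2p^2(-1)^{j+1}(2j-p)!}{j!^2}\pmod{p^3}.$$
To close the case, I would apply the same Wilson-type reduction to $(2j)!=p(p-1)!\prod_{m=1}^{2j-p}(p+m)$ to obtain $\binom{2j}{j}\equiv -p(2j-p)!/j!^2\pmod{p^2}$, which shows that $2p(-1)^j\binom{2j}{j}$ agrees mod $p^3$ with the expression above.

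The computation is essentially careful bookkeeping. The step I expect to require the most care is the second case, where the "extra" factor of $p$ coming from $k=p$ on the left-hand side must be matched against the factor of $p$ hidden inside $\binom{2j}{j}$ on the right-hand side; once this matching is made explicit via Wilson's theorem, both cases reduce cleanly to index shifts and first-order Taylor expansions in $p$.
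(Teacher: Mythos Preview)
Your proposal is correct and follows essentially the same line as the paper: both rewrite $(3j+1)\binom{3j}{j}\binom{p+2j}{3j+1}$ as $\frac{1}{j!(2j)!}\prod_{k=-j}^{2j}(p+k)$, extract the factors of $p$, and expand the remaining product to the needed order. The only cosmetic difference is in the second case: the paper notes directly that $(2j)(2j-1)\cdots(p+1)\cdot p\cdot(p-1)!=(2j)!$ and cancels to reach $2p(-1)^j\binom{2j}{j}$ in one step, whereas you detour through Wilson's theorem and the auxiliary congruence $\binom{2j}{j}\equiv -p\,(2j-p)!/(j!)^2\pmod{p^2}$; both routes land in the same place.
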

\begin{proof}If $0\leq j\leq (p-3)/2$, then we have
\begin{align*}
(3j+1)\binom{3j}j\binom{p+2j}{3j+1}&=\frac{(p+2j)\cdots(p+1)p(p-1)\cdots(p-j)}{j!(2j)!}\\
&\equiv\frac{p(2j)!(1+pH_{2j})(-1)^{j}(j)!(1-pH_j)}{j!(2j)!}\\
&\equiv p(-1)^j(1+pH_{2j}-pH_j)\pmod{p^3}.
\end{align*}
If $(p+1)/2\leq j\leq p-1$, then we have $\binom{2j}j\equiv0\pmod p$, and hence
\begin{align*}
&(3j+1)\binom{2j}j\binom{3j}j\binom{p+2j}{3j+1}\\
=&\binom{2j}j\frac{(p+2j)\cdots(2p+1)(2p)(2p-1)\cdots(p+1)p(p-1)\cdots(p-j)}{j!(2j)!}\\
\equiv&\binom{2j}j\frac{2p^2(2j)\cdots(p+1)(p-1)!(-1)^{j}(j)!}{j!(2j)!}=2p(-1)^j\binom{2j}j\pmod{p^3}.
\end{align*}
Now the proof of Lemma \ref{p2j} is complete.
\end{proof}
In view of (\ref{2.1}) and (\ref{3k1id}), we have
\begin{align*}
&\sum_{k=0}^{p-1}\frac{3k+1}{(-16)^k}\binom{2k}kf_k\\
&=\sum_{k=0}^{p-1}\frac{3k+1}{(-16)^k}\binom{2k}k\sum_{j=0}^k\binom{k+2j}{3j}\binom{3j}j\binom{2j}j(-4)^{k-j}\\
&=\sum_{j=0}^{p-1}\frac{\binom{2j}j\binom{3j}j}{(-4)^j}\sum_{k=j}^{p-1}\frac{3k+1}{4^k}\binom{2k}k\binom{k+2j}{3j}\\
&=\sum_{j=0}^{p-1}\frac{\binom{2j}j\binom{3j}j}{(-4)^j}\frac{2p(3j+1)\binom{p+2j}{3j+1}\binom{2p}p}{(2j+1)4^j}\\
&=\frac{p}{4^{p-1}}\binom{2p-1}{p-1}\sum_{j=0}^{p-1}\frac{\binom{2j}j(3j+1)\binom{3j}j\binom{p+2j}{3j+1}}{(2j+1)(-4)^j}.
\end{align*}
This, with Lemma \ref{p2j} yields that
\begin{equation}\label{main}
\sum_{k=0}^{p-1}\frac{3k+1}{(-16)^k}\binom{2k}kf_k\equiv\frac{p}{4^{p-1}}(S_1+S_2+S_3)\pmod{p^4},
\end{equation}
where
$$
S_1=p\sum_{j=0}^{(p-3)/2}\frac{\binom{2j}j}{4^j}\frac{1+pH_{2j}-pH_j}{2j+1},
$$
$$
S_2=\frac{\binom{2p-1}{p-1}\binom{p-1}{(p-1)/2}^2}{(-4)^{p-1)/2}}\ \ \mbox{and}\ \ S_3=2p\sum_{j=(p+1)/2}^{p-1}\frac{\binom{2j}j}{(2j+1)4^j}.
$$
It is well known the Morley's congruence \cite{Mor}:
\begin{equation}\label{mor}
\binom{p-1}{(p-1)/2}\equiv(-1)^{(p-1)/2}4^{p-1}\pmod{p^3}\ \ \mbox{for}\ \ p>3.
\end{equation}
And Calitz \cite{Ca} further showed that
\begin{equation}\label{ca}
\binom{p-1}{(p-1)/2}\equiv(-1)^{(p-1)/2}\left(4^{p-1}+\frac{p^3}{12}B_{p-3}\right)\pmod{p^4}.
\end{equation}
It is easy to see that
\begin{align}\label{main1}
S_1&=p\sum_{j=0}^{\frac{p-3}2}\frac{\binom{2j}j}{(2j+1)4^j}+p^2\sum_{j=0}^{\frac{p-3}2}\frac{\binom{2j}j}{4^j}\frac{H_{2j}-H_j}{2j+1}\notag\\
&\equiv p\sum_{j=0}^{\frac{p-3}2}\frac{\binom{2j}j}{(2j+1)4^j}+p^2\sum_{j=0}^{\frac{p-3}2}\binom{n}j(-1)^j\frac{H_{2j}-H_j}{2j+1}\pmod{p^3}.
\end{align}
We know that Tauraso \cite{T1} and Sun \cite[(1.5)]{s11} respectively proved that
$$
\sum_{k=1}^{p-1}\frac{\binom{2k}k}{k4^k}\equiv-H_{(p-1)/2}\pmod{p^3},\ \sum_{k=\frac{p+1}2}^{p-1}\frac{\binom{2k}k}{k4^k}\equiv(-1)^{\frac{p-1}2}2pE_{p-3}\pmod{p^2}.
$$
And it is easy to check that by (\ref{mor}), we have
$$
(-1)^{\frac{p-1}2}\binom{p-1}{\frac{p-1}2}-2^{p-1}\equiv 4^{p-1}-2^{p-1}=2^{p-1}pq_p(2)\equiv pq_p(2)\pmod{p^2},
$$
where $q_p(a)=(a^{p-1}-1)/p$ stands for the Fermat quotient.

\noindent These, with Lemma \ref{id}, (\ref{hp-1}), (\ref{mor}) and $H_{(p-1)/2}\equiv-2q_p(2)\pmod p$ \cite{lehmer} yield that
\begin{align*}
&\sum_{j=0}^{\frac{p-3}2}\binom{n}j(-1)^j\frac{H_{2j}-H_j}{2j+1}\\
&=\frac{2^{p-1}\sum_{k=1}^{\frac{p-1}2}\frac{\binom{2k}k}{k4^k}}{2p\binom{p-1}{\frac{p-1}2}}+\frac{(-1)^{\frac{p-1}2}H_{\frac{p-1}2}}{p}-\frac{2^{p-1}H_{\frac{p-1}2}}{2p\binom{p-1}{\frac{p-1}2}}-\frac{(-1)^{\frac{p-1}2}H_{p-1}}{p}\\
&\equiv\frac{((-1)^{\frac{p-1}2}\binom{p-1}{\frac{p-1}2}-2^{p-1})H_{\frac{p-1}2}}{p\binom{p-1}{\frac{p-1}2}}-E_{p-3}\\
&\equiv(-1)^{(p-1)/2}q_p(2)H_{(p-1)/2}-E_{p-3}\equiv(-1)^{\frac{p+1}2}2q^2_p(2)-E_{p-3}\pmod p.
\end{align*}
Substituting this into (\ref{main1}) and by \cite[(1.1)]{s11jnt}, we have
\begin{equation*}
S_1\equiv(-1)^{\frac{p+1}2}pq_p(2)+(-1)^{\frac{p+1}2}2p^2q^2_p(2)-p^2E_{p-3}\pmod{p^3}.
\end{equation*}
This, with $2^{p-1}=1+pq_p(2)$ and Fermat's little theorem yields that
\begin{align}\label{s1}
\frac{p}{4^{p-1}}S_1&\equiv\frac{p}{4^{p-1}}((-1)^{\frac{p+1}2}pq_p(2)+(-1)^{\frac{p+1}2}2p^2q^2_p(2)-p^2E_{p-3})\notag\\
&\equiv(-1)^{\frac{p+1}2}p^2q_p(2)-p^3E_{p-3}\pmod{p^4}.
\end{align}
It is easy to check that by (\ref{2p1p1}), (\ref{mor}) and $2^{p-1}=1+pq_p(2)$, we have
\begin{equation}\label{s2}
\frac{p}{4^{p-1}}S_2\equiv(-1)^{\frac{p-1}2}p2^{p-1}=(-1)^{\frac{p-1}2}p+(-1)^{\frac{p-1}2}p^2q_p(2)\pmod{p^4}.
\end{equation}
In view of \cite[(1.2)]{s11jnt}, we have
$$
\frac{p}{4^{p-1}}S_3\equiv2p^3E_{p-3}\pmod{p^4}.
$$
Therefore, combining this with (\ref{main}), (\ref{s1}) and (\ref{s2}), we immediately obtain the desired result
$$
\sum_{k=0}^{p-1}\binom{2k}k\frac{3k+1}{(-16)^k}f_k\equiv(-1)^{(p-1)/2}p+p^3E_{p-3}\pmod{p^4}.
$$
\section{Proof of Theorem \ref{Th1.2}}
\setcounter{lemma}{0}
\setcounter{theorem}{0}
\setcounter{corollary}{0}
\setcounter{remark}{0}
\setcounter{equation}{0}
\setcounter{conjecture}{0}
In \cite[(1.7)]{sp}, Sun showed that
\begin{equation}\label{2k+1}
\sum_{k=0}^{\frac{p-3}2}\frac{\binom{2k}k^2}{(2k+1)16^k}\equiv-2q_p(2)-pq^2_p(2)+\frac5{12}p^2B_{p-3}\pmod{p^3}
\end{equation}
and in \cite[(1.7)]{sun13}, he proved that
\begin{equation}\label{p3}
\sum_{k=0}^{(p-1)/2}\frac{\binom{2k}k^2}{16^k}\equiv(-1)^{(p-1)/2}+p^2E_{p-3}\pmod{p^3}.
\end{equation}
In view of the following congruence which was confirmed by the author, C. Wang and J. Wang \cite{mww}
$$
\sum_{k=1}^{(p-1)/2}\frac{\binom{2k}k^2}{k16^k}H_{2k}^{(2)}\equiv-\frac52B_{p-3}\pmod p,
$$
we can immediately obtain the following important lemma.
\begin{lemma}\label{h2k} For any prime $p>3$, we have
\begin{equation*}
\sum_{k=0}^{(p-3)/2}\frac{\binom{2k}k^2}{(2k+1)16^k}H_{2k}^{(2)}\equiv-\frac54B_{p-3}\pmod p.
\end{equation*}
\end{lemma}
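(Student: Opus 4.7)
The plan is to reduce the claim to the Mao--Wang--Wang congruence quoted immediately before the lemma by applying the classical reflection $k\mapsto (p-1)/2-k$ modulo $p$.

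First, I would recall the standard mod-$p$ identity $\binom{2k}{k}/(-4)^k=\binom{-1/2}{k}\equiv\binom{(p-1)/2}{k}\pmod p$, which upon squaring gives $\binom{2k}{k}^2/16^k\equiv\binom{(p-1)/2}{k}^2\pmod p$ for $0\le k\le (p-1)/2$. Setting $j=(p-1)/2-k$ and using the symmetry $\binom{(p-1)/2}{(p-1)/2-j}=\binom{(p-1)/2}{j}$, I can convert this back into $\binom{2j}{j}^2/16^j\pmod p$. Under the same substitution, $2k+1=p-2j\equiv -2j\pmod p$, so $1/(2k+1)\equiv -1/(2j)\pmod p$; and the reflection identity $H_{p-1-2j}^{(2)}=H_{p-1}^{(2)}-\sum_{i=p-2j}^{p-1}1/i^2\equiv -H_{2j}^{(2)}\pmod p$ (using Wolstenholme's $H_{p-1}^{(2)}\equiv 0\pmod p$ and $1/(i-p)^2\equiv 1/i^2$) transforms $H_{2k}^{(2)}$ into $-H_{2j}^{(2)}$.

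Combining the three transformations, the two sign changes (from $1/(2k+1)$ and from $H_{2k}^{(2)}$) cancel, so after relabelling I obtain
$$\sum_{k=0}^{(p-3)/2}\frac{\binom{2k}{k}^2}{(2k+1)16^k}H_{2k}^{(2)}\equiv\sum_{j=1}^{(p-1)/2}\frac{\binom{2j}{j}^2}{16^j}\cdot\frac{H_{2j}^{(2)}}{2j}=\frac12\sum_{j=1}^{(p-1)/2}\frac{\binom{2j}{j}^2}{j\cdot 16^j}H_{2j}^{(2)}\pmod p.$$
Applying the cited congruence $\sum_{k=1}^{(p-1)/2}\binom{2k}{k}^2H_{2k}^{(2)}/(k\cdot 16^k)\equiv-\tfrac{5}{2}B_{p-3}\pmod p$ then yields $-\tfrac{5}{4}B_{p-3}$, as required.

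The main obstacle is not conceptual but rather the careful bookkeeping of signs and index ranges. One must verify that the $k=0$ summand contributes nothing because $H_0^{(2)}=0$, which is consistent with the transformed index $j$ starting at $1$; that the endpoints correspond correctly ($k=0\leftrightarrow j=(p-1)/2$ and $k=(p-3)/2\leftrightarrow j=1$); and that Wolstenholme's congruence indeed reduces $H_{p-1-2j}^{(2)}$ to $-H_{2j}^{(2)}\pmod p$ in the precise way needed for the two sign flips to cancel and leave the coefficient $1/2$.
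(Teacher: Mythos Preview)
Your proposal is correct and follows essentially the same route as the paper: both proofs apply $\binom{2k}{k}/(-4)^k\equiv\binom{(p-1)/2}{k}\pmod p$, perform the reflection $k\mapsto(p-1)/2-k$ together with $H_{p-1-2j}^{(2)}\equiv -H_{2j}^{(2)}\pmod p$ (via Wolstenholme), and then invoke the Mao--Wang--Wang congruence $\sum_{k=1}^{(p-1)/2}\binom{2k}{k}^2H_{2k}^{(2)}/(k\,16^k)\equiv-\tfrac52 B_{p-3}\pmod p$ to conclude. Your bookkeeping of signs and endpoints is accurate and, if anything, more explicit than the paper's presentation.
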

\begin{proof} In view of (\ref{hp-1}) and $\binom{2k}k/(-4)^k\equiv\binom{(p-1)/2}k\pmod p$ for each $0\leq k\leq(p-1)/2$, we have
\begin{align*}
\sum_{k=0}^{\frac{p-3}2}\frac{\binom{2k}k^2}{(2k+1)16^k}H_{2k}^{(2)}&\equiv\sum_{k=0}^{\frac{p-3}2}\frac{\binom{\frac{p-1}2}k^2}{2k+1}H_{2k}^{(2)}=\sum_{j=1}^{\frac{p-1}2}\frac{\binom{(p-1)/2}k^2H_{p-1-2k}^{(2)}}{p-2k}\\
&\equiv-\frac12\sum_{j=1}^{\frac{p-1}2}\frac{\binom{2k}k^2(H_{p-1}^{(2)}-H_{2k}^{(2)})}{k16^k}\equiv\frac12\sum_{j=1}^{\frac{p-1}2}\frac{\binom{2k}k^2H_{2k}^{(2)}}{k16^k}\\
&\equiv-\frac54B_{p-3}\pmod p,
\end{align*}
where we used the following congruence
\begin{equation}\label{p12k}
H_{p-1-2k}^{(2)}=\sum_{j=1}^{p-1-2k}\frac1{j^2}=\sum_{j={2k+1}}^{p-1}\frac1{(p-j)^2}\equiv H_{p-1}^{(2)}-H_{2k}^{(2)}\pmod p.
\end{equation}
So the proof of Lemma \ref{h2k} is finished.
\end{proof}
\noindent {\it Proof of (\ref{t4})}.\ In view of \cite[Theorem 2.6]{sjdea}, Sun already proved that
\begin{align*}
\sum_{k=0}^{p-1}(2k+1)\frac{T_k}{4^k}\equiv p^2\sum_{k=0}^{(p-1)/2}\frac{\binom{2k}k^2}{(2k+1)16^k}(1-p^2H_{2k}^{(2)})\pmod{p^5}.
\end{align*}
This, with (\ref{2k+1}), Lemma \ref{h2k}, (\ref{ca}), \cite[Corollary 5.1]{s2000} and Fermat's little theorem yields that
\begin{align*}
&\sum_{k=0}^{p-1}(2k+1)\frac{T_k}{4^k}\\
&\equiv p^2\sum_{k=0}^{\frac{p-3}2}\frac{\binom{2k}k^2}{(2k+1)16^k}-p^4\sum_{k=0}^{\frac{p-3}2}\frac{\binom{2k}k^2}{(2k+1)16^k}H_{2k}^{(2)}+\frac{p\binom{p-1}{\frac{p-1}2}^2(1-p^2H_{p-1}^{(2)})}{4^{p-1}}\\
&\equiv-2p^2q_p(2)-p^3q^2_p(2)+\frac5{12}p^4B_{p-3}+\frac54p^4B_{p-3}+p4^{p-1}-\frac{p^4}2B_{p-3}\\
&\equiv p+\frac76p^4B_{p-3}\pmod{p^5},
\end{align*}
where we also used a fact that $4^{p-1}=1+2pq_p(2)+p^2q^2_p(2)$. So we finish the proof of (\ref{t4}).

\noindent {\it proof of (\ref{t-4})}.\ \ Also, Sun \cite[Theorem 2.7]{sjdea} already showed that
\begin{equation}\label{p5}
\sum_{k=0}^{p-1}(2k+1)\frac{T_k}{(-4)^k}\equiv p\sum_{k=0}^{(p-1)/2}\frac{\binom{2k}k^2}{16^k}(1-p^2H_{2k}^{(2)})\pmod{p^5}.
\end{equation}
As seen in (\ref{p12k}) and by using (\ref{hp-1}), we have $H_{p-1-2k}^{(2)}\equiv-H_{2k}^{(2)}\pmod p$, and hence
\begin{align*}
\sum_{k=0}^{(p-1)/2}\frac{\binom{2k}k^2}{16^k}H_{2k}^{(2)}&\equiv\sum_{k=0}^{(p-1)/2}\binom{\frac{p-1}2}k^2H_{2k}^{(2)}=\sum_{j=0}^{(p-1)/2}\binom{\frac{p-1}2}j^2H_{p-1-2j}^{(2)}\\
&\equiv-\sum_{k=0}^{(p-1)/2}\binom{\frac{p-1}2}k^2H_{2k}^{(2)}\pmod p.
\end{align*}
Thus,
$$
\sum_{k=0}^{(p-1)/2}\frac{\binom{2k}k^2}{16^k}H_{2k}^{(2)}\equiv0\pmod p.
$$
This, with (\ref{p3}) and (\ref{p5}) yields the desired result
$$
\sum_{k=0}^{p-1}(2k+1)\frac{T_k}{(-4)^k}\equiv p(-1)^{(p-1)/2}+p^3E_{p-3}\pmod{p^4}.
$$
Now the proof of Theorem \ref{Th1.1} is complete.\qed

\Ack. The author would like to thank the anonymous referees for helpful comments. The author was supported by the National Natural Science Foundation of China (12001288).


\begin{thebibliography}{ST10}
    \bibitem{Ca} L.Calitz, {\it A theorem of Glaisher}, Canadian J. Math. {\bf 5} (1953), 306--316.
    \bibitem{C} D. Callan, {\it A combinatorial interpretation for an identity of Barrucand}, J.Integer Seq. {\bf 11} (2008), Article 08.3.4, 3pp (electronic).
    \bibitem{F} J. Franel, {\it On a question of Laisant}, L'Interm\'{e}diaire des Math\'{e}maticiens {\bf 1} (1894), 45--47.
    \bibitem{gl1} J.W.L. Glaisher, {\it Congruences relating to the sums of products of the first $n$ numbers and to other sums of products}, Quart. J. Math. {\bf 31} (1900), 1--35.
    \bibitem{gl2} J.W.L. Glaisher, {\it On the residues of the sums of products of the first $p-1$ numbers, and their powers, to modulus $p^2$ or $p^3$}, Quart. J. Math. {\bf 31} (1900), 321--353.
    \bibitem{G} V.J.W. Guo, {\it Proof of two conjectures of Sun on congruences for Franel numbers}, Integral Transforms Spec. Funct. {\bf 24} (2013), 532--539.
    \bibitem{G1} V.J.W. Guo, {\it Proof of a superconjecture conjectured by Z.-H. Sun}, Integral Transforms Spec. Funct. {\bf 25} (2014), 1009--1015.
    \bibitem{IR} K. Ireland and M. Rosen, A classical Introduction to Modern Number Theory, second ed., Graduate Texts in Math., Vol. 84, Springer, New York, 1990.
    \bibitem{lehmer} E. Lehmer, {\it On congruences involving Bernoulli numbers and the quotients of Fermat and Wilson}, Ann. Math.  {\bf 39}(1938), 350--360.
    \bibitem{Liu} J.-C. Liu, {\it On two congruences involving Franel numbers}, Rev. R. Acad. Cienc. Exactas F\'is. Nat., Ser. A Math. {\bf 114} (2020), Art.201.
    \bibitem{Lt} J.-C. Liu, {\it On two supercongruences for sums of Ap\'{e}ry-like numbers}, Rev. R. Acad. Cienc. Exactas F\'{\i}s. Nat., Ser. A Mat. {\bf 115} (2021), Art. 151.
    \bibitem{M} G.-S. Mao, {\it On two congruences involving Ap\'{e}ry and Franel numbers}, Results Math. {\bf 75} (2020), Art 159.
    \bibitem{mww} G.-S. Mao, C. Wang and J. Wang, {\it Symbolic summation methods and congruences invlving harmonic numbers}, C. R. Acad. Sci. Paris, Ser. I, {\bf 357} (2019), 756--765.
    \bibitem{Mor} F. Morley, {\it Note on the congruence $2^{4n}\equiv(-1)^n(2n)!/(n!)^2$, where $2n+1$ is a prime}, Ann. Math. {\bf 9} (1895), 168--170.
    \bibitem{S}C. Schneider, {\it Symbolic summation assists combinatorics}, S\'em. Lothar. Combin. {\bf 56} (2007), Article B56b.
    \bibitem{SL}N.J.A. Sloane, Sequence A000172 in OEIS (On-Line Encyclopedia of Integer Sequences), http://oeis.org/A000172.
    \bibitem{s2000} Z.-H. Sun, {\it Congruences concerning Bernoulli numbers and Bernoulli polynomials}, Discrete Appl. Math. {\bf 105} (2000), no.1-3, 193--223.
    \bibitem{sun13} Z.-H. Sun, {\it Some conjectures on congruences}, preprint, arXiv:1103.5384v5.
    \bibitem{sjdea} Z.-H. Sun, {\it Super congruences for two Apery-like sequences}, J. Difference. Equ. Appl. {\bf 24} (2018), no.10, 1685--1713.
    \bibitem{SH20} Z.-H. Sun, {\it Congruences involving binomial coefficients and Apery-like numbers}, Publ. Math. Debrecen {\bf 96} (2020), no.3--4, 315--346.
    \bibitem{s11jnt}Z.-W. Sun, {\it On congruences related to central binomial coefficients}, J. Number Theory {\bf 13} (2011), no.11, 2219--2238.
    \bibitem{s11}Z.-W. Sun, {\it Super congruences and Euler numbers}, Sci. China Math. {\bf 54} (2011), no.12, 2509--2535.
    \bibitem{S13}Z.-W. Sun, {\it Connections between $p=x^2+3y^2$ and Franel numbers}, J. Number Theory {\bf 133} (2013), 2914--2928.
    \bibitem{sun}Z.-W. Sun, {\it Congruences for Franel numbers}, Adv. in Appl. Math. {\bf 51} (2013), no.4, 524--535.
    \bibitem{sp}Z.-W. Sun, {\it $p$-adic congruences motivated by series}, J. Number Theory {\bf 134} (2014), no.1, 181--196.
    \bibitem{T1}R. Tauraso, {\it Congruences involving alternating multiple harmonic sum}, Elrctron. J. Combin. {\bf 17} (2010), \#R16, 11pp (electronic).
    \bibitem{wolstenholme-qjpam-1862}J. Wolstenholme, {\it On certain properties of prime numbers}, Quart. J. Pure Appl. Math. {\bf 5} (1862), 35--39.
    \bibitem{Z} D. Zagier, {\it Integral solutions of Ap\'ery-like recurrence equations},
    in: Groups and Symmetries: from Neolithic Scots to John McKay, CRM Proc. Lecture Notes 47, Amer. Math. Soc., Providence, RI, 2009, pp. 349--366.
     \end{thebibliography}
     \end{document}